%%%%%%%%%%%%%%%%%%%%%%%%%%%%%%%%%%%%%%%%%%%%%%%%%%%%%%%%%%%%%%%%%%%%
%%              This is the LaTeX2e file for
%%           Integral formulas for Stirling constants
%%%%%%%%%%%%%%%%%%%%%%%%%%%%%%%%%%%%%%%%%%%%%%%%%%%%%%%%%%%%%%%%%%%%
\date{May 8, 2024}
%%%%%%%%%%%%%%%%%%%%%%%%%%%%%%%%%%%%%%%%%%%%%%%%%%%%%%%%%%%%%%%%%%%%

\documentclass[12pt,reqno]{amsart}
\usepackage{latexsym,amsmath,amsfonts,amscd,amssymb}
\usepackage{graphics,color}
\textwidth 6in \oddsidemargin.2in \evensidemargin.2in
\parskip.2cm
\textheight20cm
\baselineskip.6cm

\makeatletter
\@namedef{subjclassname@2020}{\textup{2020} Mathematics Subject Classification}
\makeatother

\newtheorem{theorem}{Theorem}

\newtheorem{proposition}[theorem]{Proposition}
\newtheorem{corollary}[theorem]{Corollary}

\newtheorem{definition}[theorem]{Definition}
\newtheorem{conjecture}[theorem]{Conjecture}

\theoremstyle{remark}

%\numberwithin{equation}{section}

\newcommand{\cL}{{\mathcal L}}
\newcommand{\cO}{{\mathcal O}}

\newcommand{\CC}{{\mathbb C}}

\newcommand{\KK}{{\mathbb K}}

\newcommand{\QQ}{{\mathbb Q}}
\newcommand{\RR}{{\mathbb R}}

\newcommand{\ZZ}{{\mathbb Z}}

\title{Stirling-Ramanujan constants are exponential periods}

\subjclass[2020]{40A05, 40G99, 11J81, 33B15, 11B68}
\keywords{Euler-Mascheroni, Stirling, Glaisher-Kinkelin, transcendental constants, exponential period}

\author[V. Mu\~{n}oz]{Vicente Mu\~{n}oz}
\address{Instituto de Matem\'atica Interdisciplinar and Departamento 
de \'Algebra, Geometr\'{\i}a y Topolog\'{\i}a, Universidad Complutense de Madrid, Spain}
\email{vicente.munoz@ucm.es}

\author[R. P\'{e}rez-Marco]{Ricardo P\'{e}rez-Marco}
\address{CNRS, IMJ-PRG, Universit\'e Paris Cit\'e, France}
\email{ricardo.perez.marco@gmail.com}

\begin{document}

\maketitle

\begin{abstract}
Ramanujan studied a general class of Stirling constants that are the resummation of some natural divergent series. 
These constants include the classical 
Euler-Mascheroni, Stirling and Glaisher-Kinkelin constants. We find natural
integral representations for all these constants that appear as exponential 
periods in the field $\QQ(t,e^{-t})$ which reveals their natural transalgebraic nature. We conjecture that all
these constants are transcendental numbers.
Euler-Mascheroni's and Stirling's integral formula are classical, 
but the integral formula for Glaisher-Kinkelin, as well as the 
integral formulas for the higher Stirling-Ramanujan constants, appear to be new. The method presented generalizes naturally to 
prove that many other constants are exponential periods over the field $\QQ(t,e^{-t})$.
\end{abstract}

% \newpage

%%%%%%%%%%%%%%%%%%%%%%%%%%%%%%%%
\section{Introduction}
%%%%%%%%%%%%%%%%%%%%%%%%%%%%%%%%

A celebrated result of De Moivre \cite{DeMoi} and Stirling \cite{Sti} is the asymptotic expansion of the 
logarithm of the factorial function, 
$$
\log s!=\sum_{k=1}^s \log k =\left (s+\frac12 \right )\log s -s +S_0 + \cO (1/s).
$$
The constant term in this asymptotic expansion is the Stirling constant 
$$
S_0=\frac{\log 2\pi}{2} =\lim_{s\to +\infty} \sum_{k=1}^s \log k -\left (\left (s+\frac12 \right )\log s -s\right ) .
$$
It is well known that the coefficients of the decaying rest $\cO (1/s)$ of the asymptotic expansion 
in the bases of the $(s^{-k})_{k\geq 1}$ 
are all rational numbers given by Bernoulli numbers. Thus, except for the Stirling constant,
all coefficients in the asymptotic expansion are rational numbers. It is not known if 
the Stirling constant is transcendental, and not even known if it is irrational\footnote[1]{The constants $e$ and $\pi$ 
are conjectured to be algebraically independent over the rationals and this easily implies that 
the Stirling constant is irrational. The algebraic independence of $e$ and $\pi$ follows from Schanuel's conjecture 
that also implies that the Stirling constant is transcendental.}.

Euler defined the Gamma function that interpolates the factorial function. The
Euler-Mascheroni constant $\gamma$ appears when looking at the logarithmic derivative of the Gamma function.
It can also be defined in a similar way as the Stirling constant by
looking at the asymptotic expansion of the divergent harmonic series
$$
\sum_{k=1}^s \frac1k = \log s +\gamma +\cO(1/s).
$$
So we have 
$$
\gamma =\lim_{s\to +\infty} \sum_{k=1}^s \frac1k  - \log s\, .
$$
Again, in this asymptotic expansion all coefficients other 
than $\gamma$ are rational numbers, the decaying part in $\cO(1/s)$ is given by Bernoulli numbers. The arithmetic 
nature of the Euler-Mascheroni constant is completely mysterious, but 
it is conjectured to be transcendental (it is not even known to be irrational). For the history of the Euler-Mascheroni constant 
we refer to the classical account by Glaisher \cite{G} and the modern one by Lagarias \cite{La}.

In general, using the Euler-McLaurin formula, Ramanujan  studied the asymptotic expansion of the 
logarithm of the $n$-factorial function (see \cite[Chapter 9, p.\ 273]{Ber})
$$
s!^{(n)}=1^{1^n}.2^{2^n}.3^{3^n}.4^{4^n} \ldots s^{s^n} \, .
$$
For example, for $n=1,2,3$ we have,
\begin{align*}
\sum_{k=1}^s k \log k &=\left ( \frac12 s^2 +\frac12 s+\frac{1}{12}\right ) \log s -\frac{1}{4}s^2 +S_1 +\cO(1/s), \\
\sum_{k=1}^s k^2 \log k &=\left ( \frac13 s^3 +\frac12 s^2+\frac{1}{6}s \right ) \log s -\frac19 s^3 +\frac{1}{12} s +S_2 + \cO(1/s), \\
\sum_{k=1}^s k^3 \log k &=\left ( \frac14 s^4 +\frac12 s^3+\frac{1}{4}s^2 -\frac{1}{120} \right ) \log s -\frac{1}{16} s^4 +\frac{1}{12} s^2 +S_3 + \cO(1/s).
\end{align*}

The Euler-McLaurin formula shows that, except for the constant coefficient $S_n$, all coefficients are 
rational numbers, in particular those of the remaining part $\cO(1/s)$ are given by Bernoulli numbers.

In general, for $n\geq 0$, we have the asymptotic expansion defining the constant $S_n$
\begin{equation*}
 \sum_{k=1}^s k^n \log k = A_n(s) \log s + B_n(s) +S_n + R_n(1/s)
\end{equation*}
where $A_n\in \QQ[s]$, $B_n\in s\QQ[s]$, and $R_n(1/s)\in  \frac1s \QQ[[\frac1s]]$.

The constants $S_n$, $n\geq 0$, are generalizations of the Stirling constant that we name, 
for historical reasons, \textit{Stirling-Ramanujan constants}.  The
Stirling constant appears as $S_0$ for $n=0$. 

For $n\leq -1$ the natural completion of the family of constants is not what we may expect. 
For $n\leq -1$ we define $S_n$ as the 
constant in the asymptotic expansion of the sum $\displaystyle  \sum_{k=1}^s k^n$. The reason for considering for $n\leq -1$ this series instead of the 
series $\displaystyle  \sum_{k=1}^s k^n \log k$ is that we consider sums of the form $\displaystyle  \sum_{k=1}^s f_n(k)$ and, for 
$n\leq 0$, we will have $f_n' =n f_{n-1}$, with $f_n(k)=k^n$ for $n\leq -1$, and starting with  $f_0(n)=1+\log n$. 
On the other hand, for $n\geq 1$, we have $f_n' =n f_{n-1}+k^{n-1}$. Note that the additive 
monomial term only introduces a simple polynomial additive term (without constant term) 
in the asymptotics of the sums that we study. So the constant in the asymptotic expansion 
stays the same. 
For all $n\in \ZZ$ we have $f_n' =n f_{n-1}$ modulo a polynomial, i.e.
we generate the sequence of functions $(f_n)$ by iterating primitives. Why this is natural becomes clear in Section 2 where we generate 
the higher Frullani integrals
as iterated integrals\footnote[2]{Ramanujan also studied the other closely related constants obtained by resummation of series 
$\displaystyle  \sum_{k=1}^s k^n \log k$ for $n<0$ (see \cite{Ber} Section 8) that are converging for $n\leq -2$ and equal to $-\zeta(-n)$. For the 
relation with Ramanujan resummation in this converging case see section V.5.7 p.168 in \cite{Ca2} (we thank the referee for this reference).}.
For $n=-1$ we obtain the Euler-Mascheroni 
constant $\gamma=S_{-1}$ which corresponds to the divergent harmonic series. For $n\leq -2$ the series is converging to $S_n=\zeta(-n)$.

The constant $S_1$, more precisely $A=e^{S_1}$, appears in the works of 
Glaisher \cite{G} and Kinkelin \cite{Ki}. It is known as the 
Glaisher-Kinkelin constant and arises in the theory of the Barnes Gamma function \cite{Ba}.

Again, all the coefficients in the above asymptotic expansions other than the constant 
one are rational numbers. The arithmetic nature of the
Stirling-Ramanujan constants is unknown. It is natural to conjecture: 

\begin{conjecture}
The Stirling-Ramanujan constants are transcendental. 
\end{conjecture}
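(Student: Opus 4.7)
The plan is to exploit the integral representations of the constants $S_n$ as exponential periods over the field $\QQ(t,e^{-t})$ established in the main body of the paper, and to combine them with conjectural principles from transcendence theory. Since the conjecture subsumes classical open questions, I do not expect an unconditional proof; my plan is to describe the cleanest route to a conditional one and to isolate the step that is out of reach.

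First I would revisit the family of integral formulas for $\{S_n\}_{n\in\ZZ}$ and calibrate against the known transcendence landscape. Even the two classical cases $S_0=\tfrac{1}{2}\log(2\pi)$ and $\gamma=S_{-1}$ already exhibit the depth of the problem: irrationality of $\gamma$ is open, and unconditional transcendence of $\log(2\pi)$ is not known. Hence the conjecture cannot be settled by elementary means and one must route through an unproved arithmetic principle. The honest target is therefore: \emph{assuming Schanuel's conjecture and (the exponential extension of) the Kontsevich-Zagier period conjecture, every $S_n$ is transcendental, and moreover the family $\{S_n\}_{n\in\ZZ}$ is algebraically independent over $\QQ(\pi,e)$.}

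The strategy for the conditional statement proceeds in two stages. Stage one handles $S_0$: Schanuel implies algebraic independence of $\pi$ and $e$, from which transcendence of $\log(2\pi)$, and hence of $S_0$, follows by Lindemann-Weierstrass. Stage two is motivic. Writing each $S_n$ as an iterated Frullani-type integral of a rational function of $t$ and $e^{-t}$, one attaches to it a class in the appropriate category of exponential motives over $\QQ$ (in the sense of Nori-Ayoub-Fres\'an). The Kontsevich-Zagier exponential period conjecture then asserts that every $\QQ$-linear relation between the values of such integrals is explained by a relation in this category, i.e.\ comes from a combination of functoriality, Stokes and Fubini. The task is to show that no such relation forces $S_n$ to be algebraic, or to lie in the $\QQ$-algebra generated by the lower $S_m$, $\pi$ and $e$. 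Concretely, one would identify the motive underlying the integrand, compute its de Rham-Betti period matrix, and verify that its transcendence degree grows as $n$ varies; distinct iterations of the primitive operation $f_n\mapsto f_{n+1}$ should produce motivically independent extensions.

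The main obstacle is precisely this last motivic-independence step. Even granting all the standard conjectures, controlling the motivic Galois group of the exponential motives built from $t$ and $e^{-t}$ is delicate, and one must rule out accidental algebraic relations between the iterated Frullani integrals, of which there are many for specific values (the Raabe and Binet formulas give nontrivial identities at low $n$). A realistic first milestone is to establish algebraic independence of just $S_0$ and $S_1$, bundling the Stirling and Glaisher-Kinkelin constants into a single Schanuel-type statement, and to verify compatibility of our period representations with Nesterenko's theorems on values of Eisenstein series at CM points. Any unconditional partial result, even a new irrationality statement for some $S_n$, would already constitute a major breakthrough comparable to Ap\'ery's theorem.
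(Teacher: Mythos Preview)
The statement is a \emph{conjecture}, and the paper does not offer a proof of it; immediately after stating it the paper remarks that the conjecture is open except for negative even $n$ (where $S_n=\zeta(-n)$ is a nonzero rational multiple of a power of $\pi$, hence transcendental by Lindemann). There is therefore no ``paper's own proof'' to compare your proposal against, and nothing you write could constitute a proof in the usual sense without resolving, in particular, the irrationality of $\gamma$.

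You correctly recognise this and present a conditional roadmap rather than a proof, which is the appropriate response. A few points in that roadmap are imprecise. Your Stage~one argument for $S_0$ is garbled: algebraic independence of $e$ and $\pi$ does not by itself yield transcendence of $\log(2\pi)$, and Lindemann--Weierstrass plays no role. The clean deduction (which is what the paper's footnote alludes to) is to apply Schanuel to the $\QQ$-linearly independent pair $\{i\pi,\log(2\pi)\}$; since $e^{i\pi}=-1$ and $e^{\log(2\pi)}=2\pi$, Schanuel forces $\pi$ and $\log(2\pi)$ to be algebraically independent, hence $S_0=\tfrac12\log(2\pi)$ transcendental. More seriously, for $n=-1$ and $n\geq 1$ your Stage~two does not actually reduce the conjecture to accepted conjectures: even granting Schanuel and a Kontsevich--Zagier-type period conjecture for exponential motives, the ``motivic independence'' you need is the entire content of the problem, and no standard conjecture is currently known to imply, for example, the transcendence of $\gamma$. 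So your program, as stated, handles unconditionally the negative even $n$, conditionally (on Schanuel) the case $n=0$, and leaves the remaining cases exactly as open as before---which is precisely the state of affairs the paper reports.
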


The conjecture is known for $n$ negative even integers, but otherwise open. The conjecture for $n=-1$ is the 
conjectured transcendence of Euler-Mascheroni constant.

These generalized Stirling constants had a deep meaning for Ramanujan transalgebraic view. 
For him, they are the ``sum'' of the divergent infinite series by the resummation method he 
discovered based on the Euler-McLaurin formula. 
As he puts it, these are ``the barycenter of the divergent sum'' that balances between the divergent series 
and the divergent integral in the Euler-McLaurin summation formula \cite{Ha1}. 
Ramanujan defines and computes these constants using the derivative 
of the Riemann zeta function at positive integer values and the values of the zeta function at positive odd values 
(see Chapter 9 in \cite{Ber}, p. 273-276, in particular formula (27.7) and the Corollary in p.276).
Ramanujan resummation procedure is described (in an imperfect form) in the classical book of Hardy \cite{Ha2}.
The reader can directly consult the original source for Ramanujan's resummation in Berndt volumes (in particular, chapter 6 of \cite{Ber}). 
More recently, a rigorous approach is proposed in the
monographs by Candelpergher \cite{Ca1}, \cite{Ca2}. Another rigorous approach is based on the Poisson-Newton formula   \cite{MPM2} 
was found, but not yet fully developed, by the authors (see Section 5.5, p.33 in \cite{MPM1}).

The Stirling-Ramanujan constants reappeared after Ramanujan in 1933 when Bendersky \cite{Be} defined 
a new hierarchy of higher Gamma functions 
different from that of Barnes \cite{Ba}. The Bendersky Gamma function interpolates the $n$-factorial function.
Contrary to the Barnes Gamma functions, higher Bendersky Gamma functions are not meromorphic since they have 
ramified singularities at negative integers. Indeed, historically, works of Kinkelin \cite{Ki} and Alekseevsky \cite{Ale1} \cite{Ale2} 
preceded by many decades those of Barnes and Bendersky. The reader can consult the excellent recent historical account by Neretin \cite{Ne} 
on the precursor work of Alekseevsky.

The Stirling-Ramanujan constants have been rediscovered multiple times by several authors, unaware that they 
appear already in Ramanujan notebooks (for example, see constants 
$B$ and $C$ in \cite[p.\ 56]{SC} and the comments therein). In some recent literature these constants are named \textit{Bendersky-Adamchik constants}
(in \cite{Ad} Adamchik rediscovers Ramanujan's computation of these constants in terms of values of the derivative of the zeta function 
at integer values). 

In a recent article \cite{Co}, Coppo gives remarkable converging series for the constants. Another
noteworthy series representation of the Glaisher-Kinkelin constant is given by Pain in \cite{Pa}.

All these higher Gamma functions and the associated Stirling-Ramanujan constants arise in multiple mathematical contexts, as for 
example in Shintani's work in Algebraic Number theory, in the computation of determinants of Laplacians in Riemannian geometry, 
and in the zeta-regularization theory in Physics. As a curiosity, in the article \cite{KB} in the Physics literature we 
find the  computation of the Stirling-Ramanujan constants up to $S_3$ (probably the record in the literature).

% \newpage

The main purpose of this article is to give a general whole integral formulas for 
all the Stirling-Ramanujan constants $S_n$.

\begin{theorem}[General Exponential Period Formula] \label{thm:main}
% \break

For $n\geq 0$ we have
$$
S_n =  (-1)^{n+1} n! \int_0^{+\infty}  
 \frac{1}{t^n} \left (\frac{1}{1-e^{-t}}- \sum_{k=-1}^{n} b_k t^{k} -r_n t^{n+1}\right ) \frac{e^{-t} dt}{t} \, ,
$$
where $r_n$ is the rational number
$$
r_n=\sum_{k=1}^{n+1} \frac{(-1)^{k+1}}{k!} \, b_{n-k}  H_{k}\, ,
$$
where the $(H_k)$ are the harmonic numbers, $H_k=1+\frac12+\ldots +\frac1k$, and 
the coefficients $(b_k)$ have the generating function
$$
 \frac{1}{1-e^{-t}}= \sum_{k=-1}^{+\infty} b_k t^k\, ,
$$
and they are given by $b_k=(-1)^{k+1}\frac{B_{k+1}}{(k+1)!}$, where $B_k$ is the $k$-th Bernoulli number.
\end{theorem}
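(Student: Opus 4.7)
The plan is to derive the integral formula by combining the Frullani representation of $\log k$ with a careful asymptotic analysis of $F(s) = \sum_{k=1}^s k^n \log k$ as $s\to\infty$, and then matching the resulting identity against the defining asymptotic expansion. First, substitute $\log k = \int_0^\infty (e^{-t}-e^{-kt})\,t^{-1}\,dt$ and interchange the finite sum with the integral (justified by absolute convergence) to obtain
\[
F(s) = \int_0^\infty \frac{P_n(s)\,e^{-t} - G_n(s,t)}{t}\,dt,
\]
where $P_n(s) = \sum_{k=1}^s k^n \in \QQ[s]$ and $G_n(s,t) = \sum_{k=1}^s k^n e^{-kt}$. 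Write $G_n(s,t) = G_n^\infty(t) - G_n^{>s}(t)$ with $G_n^\infty(t) = \sum_{k=1}^\infty k^n e^{-kt} = (-\partial_t)^n (e^t-1)^{-1}$. Using $\frac{1}{e^t-1} = \frac{1}{1-e^{-t}} - 1$, the Laurent expansion of $G_n^\infty$ at $t=0$ has the single polar term $n!\,t^{-n-1}$, and the analytic part is determined by the coefficients $b_{n+j}$ for $j\geq 0$.

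The next step is to regularize the divergences and match the asymptotic expansion. Subtracting from the integrand the Taylor/Laurent polynomial that captures the singular and polynomial parts of $G_n^\infty$ up to order $t^{n+1}$ produces the bracketed integrand of the statement with $\sum_{k=-1}^n b_k t^k$. The polynomial-in-$s$ divergences are absorbed into $B_n(s)$ via the closed form of $P_j(s)$ as a polynomial in $s$, while the $\log s$ contribution is extracted via a second application of Frullani: $\log s = \int_0^\infty (e^{-t}-e^{-st})\,t^{-1}\,dt$. The harmonic-number coefficient $r_n$ arises from integrating logarithmic terms against $t^k e^{-t}$ by means of the identity $\int_0^\infty t^k e^{-t}\log t\,dt = k!\,(H_k-\gamma)$: the Euler-Mascheroni contributions cancel across the various Taylor subtractions, leaving precisely $r_n = \sum_{k=1}^{n+1} \frac{(-1)^{k+1}}{k!}\,b_{n-k}\,H_k$.

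Finally, pass to the limit $s\to\infty$: the tail $G_n^{>s}(t)$ contributes only $O(1/s)$ uniformly and is absorbed into $R_n(1/s)$, while the remaining integral is exactly the one stated. The overall prefactor $(-1)^{n+1}\,n!$ reflects the Mellin/Gamma structure $\Gamma(s) \sim (-1)^n / [n!\,(s+n)]$ near the pole $s=-n$, which governs the analytic continuation implicit in the Taylor subtractions. The principal obstacle is the combinatorial bookkeeping: one must verify that all polynomial-in-$s$ and $\log s$ terms cancel without residue, that the $\gamma$ contributions from the digamma values $\psi(k+1) = -\gamma + H_k$ cancel among themselves, and that the surviving integrand has exactly the stated form, with the correct sign, the factorial, the Laurent truncation $\sum_{k=-1}^n b_k t^k$, and the precise harmonic-number combination defining $r_n$.
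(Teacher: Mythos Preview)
Your starting point differs from the paper's: you insert the basic Frullani integral for $\log k$ into $\sum_{k=1}^s k^n\log k$, whereas the paper uses the \emph{higher} Frullani integrals for $(s+1)^n\log(s+1)$ and then sums over $s$. The paper's route already carries the weight $t^{-n-1}e^{-t}$ and the prefactor $(-1)^{n+1}n!$; summing in $s$ simply replaces $e^{-st}$ by $(1-e^{-st})/(1-e^{-t})$, so the target factor $1/(1-e^{-t})$ appears at once, and the integrand splits cleanly into three pieces (a combination of higher Frullani integrals giving $\tilde A_n(s)\log(s+1)$, a Laplace-type $O(1/s)$ tail, and a constant integral after Faulhaber cancellations).

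Your sketch, by contrast, has two concrete gaps. First, your integral carries weight $1/t$, with the power $n$ hidden inside $G_n=(-\partial_t)^nG_0$; you never explain how to convert this to an integral against $t^{-n-1}e^{-t}\,dt$, and the split $G_n=G_n^\infty-G_n^{>s}$ produces pieces that are not separately integrable near $t=0$ (each $\int_0^\infty e^{-kt}\,dt/t$ diverges), so the claim that the tail is ``$O(1/s)$ uniformly'' is not well-posed as written. Second, your mechanism for $r_n$ is unsubstantiated: you invoke $\int_0^\infty t^k e^{-t}\log t\,dt=k!(H_k-\gamma)$ and an unspecified cancellation of all the $\gamma$'s, yet no $\log t$ appears anywhere in your integrals --- you only have $e^{-t}$, $e^{-kt}$, $e^{-st}$ and rational functions of $t$. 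In the paper $r_n$ arises from something different and fully explicit: one first obtains $\tilde S_n$ (the integral \emph{without} the $r_n t^{n+1}$ term) as the constant in the expansion against $\log(s+1)$, and the passage from $\log(s+1)$ to $\log s$ contributes the rational correction $(-1)^n n!\,r_n$, with the harmonic numbers entering via the identity $\sum_{j=1}^m\frac{(-1)^j}{j}\binom{m}{j}=-H_m$. No $\gamma$ is ever involved.
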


There are multiple different expressions for the Euler-Mascheroni, Stirling and Glasher-Kinkelin constants
in the literature (see \cite{SC} for example). But starting from $S_1$ these integral expressions seem to be new.

For $n=-1$ the formula makes sense and we recover the classical integral formula for $\gamma=S_{-1}$  
\begin{equation*}
\gamma = \int_0^{+\infty} t \, \left(\frac{1}{1-e^{-t}}-\frac1t \right) \, \frac{e^{-t}}{t} \, dt \, .
\end{equation*}
This integral formula is due to Euler back in 1770 (\cite{[E393]} section 25), but Whittaker and Watson \cite[Example 2, p.\ 248]{WW} 
attributes this formula to Dirichlet (presumably to \cite{Di}, 1836). Moreover, Euler also gives 
this formula in \cite{[E583]} (1785) and devotes a full article to it (\cite{[E629]}, 1789).

For $n\leq -2$ the formula also makes sense. In that case the series 
$$
\zeta (-n)=\sum_{k=0}^{+\infty} (k+1)^n
$$
is converging, the asymptotic expansion of the partial sum reduces to a constant (its sum),  
and the integral formula is directly related to the classical formula for the Riemann zeta function used 
by Riemann in his investigations of the complex extension of $\zeta$
$$
\zeta(s) =\frac{1}{\Gamma(s)} \, \int_0^{+\infty} \frac{t^{s}}{1-e^{-t}} \, \frac{e^{-t} dt}{t} \, .
$$

For $n=0$, for the Stirling constant $S_0$ we recover the classical integral formula due to Pringsheim \cite{Pr} (see also \cite{WW}, \cite[p.\ 288]{Pa})
$$
S_0=\frac{\log (2 \pi )}{2} = - \int_0^{+\infty} \left(\frac{1}{1-e^{-t}}-\frac1t -\frac12 -t \right) \, \frac{e^{-t}}{t} \, dt \, .
$$

For $n=1$, for the Glaisher-Kinkelin constant $\log A$ we get the following integral formula:
\begin{corollary}[Integral formula for Glaisher-Kinkelin constant]
\begin{equation*}
  S_1 = \log A=\int_0^{+\infty} \frac1t \, \left(\frac{1}{1-e^{-t}} - \frac1t
   -\frac{1}{2}-\frac{t}{12}+ \frac{t^2}{4}\right) \frac{e^{-t}}{t} \,  dt \, .
   \end{equation*}
 \end{corollary}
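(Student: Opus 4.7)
The plan is to specialize the General Exponential Period Formula (Theorem \ref{thm:main}) to the case $n=1$ and compute the resulting constants explicitly.

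First I would identify the relevant coefficients $b_k$ from the generating function $\frac{1}{1-e^{-t}}=\sum_{k=-1}^{+\infty}b_k t^k$. Using the formula $b_k=(-1)^{k+1}\frac{B_{k+1}}{(k+1)!}$ together with the values $B_0=1$, $B_1=-\frac12$, $B_2=\frac16$, one obtains
\[
b_{-1}=1,\qquad b_0=\tfrac12,\qquad b_1=\tfrac{1}{12},
\]
which are exactly the first three coefficients of the Laurent expansion at $t=0$ and match the parenthetical terms $-\frac1t-\frac12-\frac{t}{12}$ of the integrand in the corollary.

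Next I would compute the rational number $r_1$ from the formula
\[
r_n=\sum_{k=1}^{n+1}\frac{(-1)^{k+1}}{k!}\,b_{n-k}\,H_k
\]
with $n=1$, which gives two terms:
\[
r_1=\frac{(-1)^{2}}{1!}\,b_0\,H_1+\frac{(-1)^{3}}{2!}\,b_{-1}\,H_2=\frac12\cdot 1-\frac12\cdot\frac32=-\frac14,
\]
so that $-r_1 t^2=\frac{t^2}{4}$, matching the last term inside the parentheses.

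Finally I would substitute into Theorem \ref{thm:main} with $n=1$: the prefactor $(-1)^{n+1}n!=1$ gives exactly
\[
S_1=\int_0^{+\infty}\frac{1}{t}\left(\frac{1}{1-e^{-t}}-\frac1t-\frac12-\frac{t}{12}+\frac{t^2}{4}\right)\frac{e^{-t}\,dt}{t},
\]
and since $S_1=\log A$ by the identification of the Glaisher-Kinkelin constant as the constant term in the asymptotic expansion of $\sum_{k=1}^s k\log k$, the corollary follows. There is no real obstacle here beyond the bookkeeping of signs in the harmonic-number sum defining $r_1$; the only thing one must be careful about is using the convention $B_1=-\frac12$ consistent with the generating function written in the theorem.
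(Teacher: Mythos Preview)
Your proof is correct and follows essentially the same approach as the paper: you specialize Theorem~\ref{thm:main} to $n=1$, compute $b_{-1},b_0,b_1$ and $r_1=-\tfrac14$, and substitute. The paper does exactly this in its ``Particular cases'' section, using the equivalent reindexed form of $r_n$ from Proposition~\ref{eqn:rn} to obtain $r_1=-\tfrac{H_2 b_{-1}}{2!}+\tfrac{H_1 b_0}{1!}=-\tfrac14$.
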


We also get the following new integral formulas for the constants $S_2$ and $S_3$.

\begin{corollary} We have
\begin{align*}
S_2 &=-2\int_0^{+\infty} \frac{1}{t^2} \, \left(\frac{1}{1-e^{-t}} - \frac1t
   -\frac{1}{2}-\frac{t}{12}   -\frac1{72} t^3 \right) \frac{e^{-t}}{t} \,  dt  \\
S_3 &= 6 \int_0^{+\infty} \frac{1}{t^3} \, \left(\frac{1}{1-e^{-t}} - \frac1t
   -\frac{1}{2}-\frac{t}{12}  +\frac{t^3}{720} -\frac{1}{288} t^4\right) \frac{e^{-t}}{t} \,  dt 
\end{align*}
\end{corollary}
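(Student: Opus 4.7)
The plan is to obtain both formulas as direct specializations of Theorem~\ref{thm:main} to the values $n=2$ and $n=3$. There are essentially three ingredients to assemble: the overall prefactor $(-1)^{n+1}n!$, the polynomial correction $\sum_{k=-1}^{n} b_k t^k$ inside the parenthesis, and the rational coefficient $r_n$ in front of $t^{n+1}$. The first two are immediate, and only $r_n$ requires a short arithmetic check; there is no serious obstacle.

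First I would tabulate the relevant coefficients $b_k=(-1)^{k+1}B_{k+1}/(k+1)!$ from the generating function $1/(1-e^{-t})=\sum_{k\geq -1}b_kt^k$. Using $B_1=-\tfrac12$, $B_2=\tfrac16$, $B_3=0$, $B_4=-\tfrac{1}{30}$, $B_5=0$, this yields
\[
b_{-1}=1,\quad b_0=\tfrac{1}{2},\quad b_1=\tfrac{1}{12},\quad b_2=0,\quad b_3=-\tfrac{1}{720},\quad b_4=0.
\]
With these values, the polynomial $\sum_{k=-1}^{n} b_k t^k$ reproduces exactly the terms $\tfrac{1}{t}+\tfrac{1}{2}+\tfrac{t}{12}$ (for $n=2$) and $\tfrac{1}{t}+\tfrac{1}{2}+\tfrac{t}{12}-\tfrac{t^3}{720}$ (for $n=3$) appearing in the stated integrands, while the prefactors $(-1)^{3}2!=-2$ and $(-1)^{4}3!=6$ match.

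Next I would compute $r_n$ from
\[
r_n=\sum_{k=1}^{n+1}\frac{(-1)^{k+1}}{k!}\,b_{n-k}\,H_k,\qquad H_k=1+\tfrac{1}{2}+\cdots+\tfrac{1}{k}.
\]
For $n=2$, this gives three terms
\[
r_2=\frac{b_1H_1}{1!}-\frac{b_0H_2}{2!}+\frac{b_{-1}H_3}{3!}
=\frac{1}{12}-\frac{1}{2}\cdot\frac{1}{2}\cdot\frac{3}{2}+\frac{1}{6}\cdot\frac{11}{6}
=\frac{6-27+22}{72}=\frac{1}{72},
\]
matching the coefficient of $t^3$ in the statement. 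For $n=3$ the $k=1$ term vanishes because $b_2=0$, and the remaining three terms evaluate to
\[
r_3=-\frac{1}{2}\cdot\frac{1}{12}\cdot\frac{3}{2}+\frac{1}{6}\cdot\frac{1}{2}\cdot\frac{11}{6}-\frac{1}{24}\cdot 1\cdot\frac{25}{12}
=\frac{-18+44-25}{288}=\frac{1}{288},
\]
which is the coefficient of $t^4$ in the stated formula for $S_3$.

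Assembling these ingredients, the two integrals in the corollary coincide term-by-term with the integral given by Theorem~\ref{thm:main} at $n=2$ and $n=3$, respectively. The only non-trivial step is the elementary rational arithmetic producing $r_2=1/72$ and $r_3=1/288$, and this is also the place where a sign or harmonic-number slip would be easiest; otherwise the corollary follows without further work.
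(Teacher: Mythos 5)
Your proposal is correct and follows essentially the same route as the paper: specialize Theorem~\ref{thm:main} to $n=2,3$, read off the $b_k$ from the expansion of $1/(1-e^{-t})$, and evaluate $r_2=\tfrac{1}{72}$, $r_3=\tfrac{1}{288}$ (the paper computes these via Proposition~\ref{eqn:rn}, which is just a re-indexing of the sum you used). The arithmetic checks out, so nothing further is needed.
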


An interesting fact about all these new formulas is that all the Stirling-Ramanujan constants appear 
as Exponential Periods with base field of functions $\QQ(t,e^{-t})$. This reveals their  
transalgebraic nature. In general, we can consider a class $\KK$ of transalgebraic functions, as those appearing in 
Liouville towers of functions (see for example \cite{Kho}, \cite{Ritt}), obtained by algebraic, logarithmic, integral 
and exponential extensions.
One defines a \textit{Transalgebraic period} over $\KK$ is a number of the form
$$
\omega = \int_\eta f(s) \, ds 
$$
where the integral is converging and taken over an open path $\eta$ where $f\in \KK$ is holomorphic and 
$\eta$ has end-points that are singularities of $f$.
This notion is interesting particularly when the class of functions $\KK$ is a countable class, then 
most of complex numbers are not periods over $\KK$. 
For example this occurs for a Liouville type of extension of the differential field 
of rational functions $\QQ(s)$. When we adjoint a finite number of exponential function $\KK=\QQ(t,e^{\omega_1 t}, \ldots, e^{\omega_nt})$, 
we obtain \textit{Exponential Periods}.

A different notion of exponential periods is considered by Konstevich and Zagier \cite{KZ} by analogy
with the more classical algebraic periods, where an algebraic differential form 
is integrated over a cycle of an algebraic variety. It is easy to see that this definition is 
more restrictive than Transalgebraic Periods as defined above.
Our point of view follows the classical origin.
Exponential periods were studied by Liouville, Picard, Fuchs,... already in the 19th century. 
They appear in the natural context of log-Riemann surface theory as the location of the ramifications, and 
as asymptotic values of transcendental functions  
associated to log-Riemann surfaces. These  log-Riemann surfaces are transalgebraic curves that generalize classical algebraic 
curves by allowing a finite number of infinite ramification points. The determinant of the matrix of periods which comes from 
a bases of a vector space of transcendental functions associated to the transalgebraic curve is the Ramificant Determinant.
This determinant is a fundamental object of the transalgebraic algebra of transalgebraic curves 
(see the original manuscript \cite{BPM1}, and subsequent articles \cite{BPM2}, \cite{BPM3}, and in particular \cite{BPM4} where some 
exponential periods are the main object of study, and the more recent one \cite{PM2}).

\bigskip

If we remove the combinatorial complexity in the definition and the constants, up to a rational affine combination, the family of 
Stirling-Ramanujan constants is rationally equivalent to the family of Upsilon constants considered by the authors in 
a related context (unplublished manuscript):

\begin{definition}[Upsilon constants]
For $n\geq -1$ we define
$$
\Upsilon_n = \int_0^{+\infty} \frac{1}{t^n} \left (\frac{1}{1-e^{-t}}- \sum_{k=-1}^{n} b_k t^{k} \right ) \frac{e^{-t} dt}{t}\, .
$$
\end{definition}

Obviously we have 
$$
S_n= (-1)^{n+1} n! (\Upsilon_n -r_n),
$$
and the transcendence conjecture for $n\geq -1$ is equivalent to the transcendence conjecture for the Upsilon constants. 
\begin{conjecture}
Upsilon constants $\Upsilon_n$ are transcendental. 
\end{conjecture}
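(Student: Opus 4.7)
The plan is to derive the conjecture from its direct equivalence with the transcendence conjecture for the Stirling-Ramanujan constants stated just before. The rational affine identity
$$
S_n = (-1)^{n+1} n!\,(\Upsilon_n - r_n),
$$
with $r_n \in \QQ$ and $(-1)^{n+1} n! \in \QQ\setminus\{0\}$, immediately shows that $\Upsilon_n$ is algebraic over $\QQ$ if and only if $S_n$ is algebraic over $\QQ$, for every $n \geq -1$. So, as the author already notes, the task reduces exactly to establishing transcendence of $S_n$ in the range $n \geq -1$.

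This reduction is the complete \emph{unconditional} content currently available: in the range $n \geq -1$ no single case is known. The case $n=-1$ is the open transcendence of $\gamma$, the case $n=0$ is the open transcendence of $\log(2\pi)/2$, and the higher cases include the Glaisher-Kinkelin constant $\log A$ and its successors. Thus any real proof has to be conditional on a substantial unproven transcendence input, or else introduce genuinely new machinery.

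A natural conditional route would start from Schanuel's conjecture. Applied to the $\QQ$-linearly independent pair $\{1, 2\pi i\}$, Schanuel's conjecture gives algebraic independence of $e$ and $\pi$ over $\QQ$. Pairing this with the explicit representation of $\Upsilon_n$ furnished by Theorem \ref{thm:main}, one would extract transcendence of each $\Upsilon_n$ by contour-deforming the integral and exploiting the simple poles of $e^{-t}/(1-e^{-t})$ at $2\pi i \ZZ$ with explicit rational residues. The resulting series representation involves rational combinations of $\pi^{-k}$ and of $\zeta(k+1)$-type values, and any hypothetical algebraic relation on $\Upsilon_n$ should then be pushed into a forbidden relation among $e$, $\pi$ (and, for the harder $n$, odd zeta values).

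The main obstacle is that no currently available transcendence machinery handles exponential periods of the form $\int_0^{+\infty} R(t,e^{-t})\,dt$ with $R \in \QQ(t,e^{-t})$ on an unconditional basis. Hermite-Lindemann, Lindemann-Weierstrass, and Baker's theorem all concern algebraic combinations of $e^{\alpha_i}$ or $\log \alpha_i$ for algebraic $\alpha_i$, not convergent integrals of such expressions along an infinite ray. Adapting the Kontsevich-Zagier period philosophy to the transalgebraic and exponential setting advocated in this paper, i.e.\ producing a transcendence criterion that reads off algebraic independence of an exponential period from the monodromy, residue and asymptotic data of its integrand on the underlying log-Riemann surface, is the crucial missing tool, and is precisely where future progress on this conjecture will have to come from.
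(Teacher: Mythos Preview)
The statement you were asked to ``prove'' is a \emph{conjecture}, and the paper offers no proof of it; it is stated immediately after the definition of the $\Upsilon_n$ and is explicitly presented as open, equivalent (via the rational relation $S_n=(-1)^{n+1}n!(\Upsilon_n-r_n)$) to the earlier transcendence conjecture for the Stirling--Ramanujan constants. Your proposal correctly identifies this equivalence---which is exactly the one-line observation the paper makes---and then honestly acknowledges that no unconditional argument is available in the range $n\geq -1$.

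So there is no proof in the paper to compare against, and your text is not a proof either but a survey of the obstructions and a sketch of a conditional route. That is the appropriate response here. Two minor remarks on the speculative part: (i) the relation $S_n=(-1)^{n+1}n!(\Upsilon_n-r_n)$ is only literally valid for $n\geq 0$ (for $n=-1$ the factorial is undefined, but one has $\Upsilon_{-1}=S_{-1}=\gamma$ directly from the definitions, so the equivalence still holds); (ii) your contour-deformation idea would produce a series in $\pi^{-k}$ and possibly zeta values, but turning a hypothetical algebraic relation on such a series into a forbidden relation among $e,\pi$ is itself a substantial open-ended step even under Schanuel, so that paragraph is heuristic rather than a genuine conditional proof outline. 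None of this is a flaw in your write-up, since you flag the limitations yourself; it simply confirms that the conjecture remains open, as the paper says.
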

It is natural to raise the question of their algebraic independence for $n\geq -1$.

% \bigskip

\newpage
% 

%%%%%%%%%%%%%%%%%%%%%%%%%%%%%%%%
\section{Higher Frullani integrals}
%%%%%%%%%%%%%%%%%%%%%%%%%%%%%%%%

The classical Frullani integral expresses $\log s$ as an exponential period:
$$
\log s = \int_0^{+\infty} \frac{1}{t}(e^{-t}-e^{-st}) dt .
$$ 
This formula takes a nicer form when we single out the differential element $\frac{e^{-t} dt}{t}$, that is the
natural differential\footnote[3]{For the reader knowledgeable of the theory of log-Riemann surfaces, 
the geometry associated to this basic differential $\omega = \frac{e^{-t} dt}{t}$ 
is the tube-log-Riemann surface that derives from 
the log-Riemann surface of the logarithm replacing a planar sheet by a tube $\CC/\ZZ$ 
(see \cite{BPM1}).} in $\CC^*$  for periods with an exponential singularity at $\infty$ and a polar 
singularity at $0$.
The expressions are also nicer when we work with functions of the variable $s+1$ instead of $s$. 
This is because $\log (s+1)$, 
contrary to $\log (s)$, is an LLD function (a Left Located Divisor function has all of its zeros and poles, or in 
general singularities, in the left half plane $\{ \Re s<0\}$). This is a relevant notion in the theory 
of Poisson-Newton formula (\cite{MPM1}, \cite{MPM2}, \cite{MPM3}).

Because of these reasons, the natural way to write down the Frullani integral is
$$
\log (s+1) = \int_0^{+\infty} (1-e^{-st}) \frac{e^{-t} dt}{t} \ .
$$
Observe that the Frullani integral is obtained by integration on the variable $s$ over the 
interval $[0,s]$ of the elementary 
integral
$$
\frac{1}{s+1}=\int_0^{+\infty} t \, e^{-st} \, \frac{e^{-t} dt}{t} \ ,
$$
that can be thought of the primitive generator integral.
Starting with the Frullani integral and iterating integrations on the variable $s$ over 
the interval $[0,s]$, we obtain higher Frullani integrals that express $s^k\log s$ as an 
exponential integral.

\begin{theorem} \label{thm:Frullani}
We have 
\begin{align*}
(s+1)^n  \log (s+1) 
&= \sum_{k=1}^n \binom{n}{k} (H_n-H_{n-k})s^k + \\
&\ \ \ +(-1)^{n+1} n! \int_0^{+\infty} \frac{1}{t^n}\left(e^{-st}- \sum_{k=0}^{n} \frac{(-s)^k t^k}{k!} \right) 
\frac{e^{-t} dt}{t}\, ,
\end{align*}
where  $H_n=1+\frac12+\cdots +\frac1n$ are the harmonic numbers for $n\geq 1$, and $H_0=0$.
\end{theorem}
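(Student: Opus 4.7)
The plan is to prove the identity by induction on $n\ge 0$, integrating in the variable $s$ over $[0,s]$ just as suggested in the paragraph introducing the Frullani integral. The base case $n=0$ is immediate, since the polynomial sum is empty and $(-1)^{0+1}\cdot 0!\int_0^{+\infty}(e^{-st}-1)\tfrac{e^{-t}dt}{t}=\int_0^{+\infty}(1-e^{-st})\tfrac{e^{-t}dt}{t}$ is exactly the Frullani formula for $\log(s+1)$ recalled just above.

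For the inductive step, differentiation gives $\frac{d}{ds}[(s+1)^{n+1}\log(s+1)]=(n+1)(s+1)^n\log(s+1)+(s+1)^n$, so integrating from $0$ to $s$ yields
$$(s+1)^{n+1}\log(s+1)=(n+1)\int_0^s(u+1)^n\log(u+1)\,du+\frac{(s+1)^{n+1}-1}{n+1}.$$
I would substitute the inductive hypothesis into the integrand and swap the order of the $u$- and $t$-integrations; this is justified because the Taylor remainder $e^{-ut}-\sum_{k=0}^n(-ut)^k/k!$ is $O(t^{n+1})$ uniformly for $u\in[0,s]$, so the full integrand is $O(1)$ near $t=0$ and exponentially small at infinity. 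Integrating term by term and shifting $j=k+1$ yields the key identity
$$\int_0^s\!\left(e^{-ut}-\sum_{k=0}^n\frac{(-u)^kt^k}{k!}\right)du=-\frac{1}{t}\left(e^{-st}-\sum_{j=0}^{n+1}\frac{(-s)^jt^j}{j!}\right),$$
and multiplying by $\frac{1}{t^n}$ together with the outer constant $(n+1)(-1)^{n+1}n!$ produces precisely $(-1)^{n+2}(n+1)!\int_0^{+\infty}\frac{1}{t^{n+1}}\left(e^{-st}-\sum_{j=0}^{n+1}(-s)^jt^j/j!\right)\frac{e^{-t}dt}{t}$, the integral term at level $n+1$.

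For the polynomial part, integrating $\sum_{k=1}^n\binom{n}{k}(H_n-H_{n-k})u^k$ from $0$ to $s$, multiplying by $n+1$, and using the Pascal-type identity $\frac{n+1}{k+1}\binom{n}{k}=\binom{n+1}{k+1}$ together with the shift $j=k+1$ gives $\sum_{j=2}^{n+1}\binom{n+1}{j}(H_n-H_{n+1-j})s^j$. Adding the remainder $\frac{(s+1)^{n+1}-1}{n+1}=\sum_{j=1}^{n+1}\frac{1}{n+1}\binom{n+1}{j}s^j$ supplies the missing $j=1$ term $\binom{n+1}{1}(H_{n+1}-H_n)s=s$, and for $j\ge 2$ the harmonic telescoping $H_n+\frac{1}{n+1}=H_{n+1}$ combines the two contributions into $\binom{n+1}{j}(H_{n+1}-H_{n+1-j})s^j$, closing the induction. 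The proof is essentially algebraic bookkeeping; the main obstacle is keeping the Pascal identity, the index shift, and the harmonic telescoping aligned so that the $j=1$ term, which arises only from the polynomial remainder $\frac{(s+1)^{n+1}-1}{n+1}$, fits the target pattern exactly, while the only analytic point is the Fubini justification noted above.
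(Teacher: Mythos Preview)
Your proof is correct and follows exactly the approach the paper indicates: the paper does not prove Theorem~\ref{thm:Frullani} in place but refers to \cite{BMPM}, noting that one obtains the higher Frullani integrals ``starting with the Frullani integral and iterating integrations on the variable $s$ over the interval $[0,s]$,'' which is precisely your induction. The base case, the identity $\int_0^s\bigl(e^{-ut}-\sum_{k=0}^n(-ut)^k/k!\bigr)du=-\tfrac{1}{t}\bigl(e^{-st}-\sum_{j=0}^{n+1}(-st)^j/j!\bigr)$, and the harmonic-number bookkeeping via $\tfrac{n+1}{k+1}\binom{n}{k}=\binom{n+1}{k+1}$ and $H_n+\tfrac{1}{n+1}=H_{n+1}$ are all handled cleanly.
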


This explicit formula can be found in the references \cite{M} and \cite{MMR}. These higher Frullani 
integrals are relevant to explain the most popular BBP formulas as was unveiled in the recent article by D.~Barsky and the authors 
\cite{BMPM}. A complete proof of Theorem \ref{thm:Frullani} can be found in this reference (Prop. 2.1 and Prop. 2.2 of \cite{BMPM}).

We have a polynomial part, without constant term, and an integral part which 
is the Hadamard regularization of the Laplace transform of $\frac{1}{t^{n+1}} e^{-t}$
as considered in \cite{MPM2}.

%%%%%%%%%%%%%%%%%%%%%%%%%%%%%%%%%%%
\section{Technical preliminaries}
%%%%%%%%%%%%%%%%%%%%%%%%%%%%%%%%%%%

\subsection{Technical normalizations} 
It appears combinatorially simpler to find the integral formulas for the constants appearing in a slight
different expansion base, which is closer to the Upsilon constants from the introduction.

In the proof it is natural to compute the expansion on the shifted variable $s+1$,
and this comes down to 
$$
\sum_{k=1}^{s} k^n \log k =\tilde A_n(s) \log (s+1) + \tilde B_n(s) +\tilde S_n + \tilde R_n(1/s),
$$
where $\tilde A_n \in \QQ[s]$,  $\tilde B_n\in s\QQ[s]$, $\deg \tilde A_n =\deg \tilde B_n=n+1$, 
and $\tilde R_n(1/s)\in \frac1s\QQ[[\frac1s]] $. 

From this it follows the expansion in the form used to define the Stirling-Ramanujan constants
$$
\sum_{k=1}^s k^n \log k =A_n(s) \log s + B_n(s) +S_n + R_n(1/s),
$$
with $A_n\in \QQ[s]$, $B_n\in s\QQ[s]$, and $R_n(1/s)\in  \frac1s\QQ[[\frac1s]] $.

Indeed, we have
\begin{align*}
\sum_{k=1}^s k^n \log k &= \tilde A_n(s) \log (s+1) + \tilde B_n(s) +\tilde S_n + \tilde R_n(1/s) \\
&= \tilde A_n(s) \log (1+1/s) + \tilde A_n(s) \log s  + \tilde B_n(s) +\tilde S_n + \tilde R_n(1/s) \\
 &=\tilde A_n(s) \sum_{k=1}^{+\infty} \frac{(-1)^{k+1}}{ks^k}   + \tilde A_n(s) \log s  + \tilde B_n(s) +\tilde S_n + \tilde R_n(1/s) .
\end{align*}
Therefore, if we write down the polynomial as $\tilde A_n(s) = \sum\limits_{j=0}^{n+1} \tilde a_j s^j$,  then we have
\begin{equation} \label{eqn:move}
S_n =\tilde S_n + \sum_{k=1}^{n+1} \frac{ (-1)^{k+1}}{k} \, \tilde a_k  \, .
\end{equation}

\subsection{Asymptotic decay}

We use the following classical and well known fact from Laplace transform theory (see \cite{DeBru} for example).

\begin{proposition}\label{prop:Laplace_decay}
Let $f: \RR_+\to \RR$ differentiable, in particular at the right at $t=0$. We assume $f'$ is bounded. Then the 
Laplace transform
$$
{\cL} f(s)=\int_0^{+\infty} f(t)  \, e^{-st}\, dt
$$
decays as $\cO(1/s)$ when $s\to +\infty$. 
Moreover, if we assume that $f$ is infinitely differentiable and the derivatives are bounded, then 
for every $n\geq 1$, we have the asymptotic expansion
$$
{\cL} f(s)= \frac{f(0)}{s} +\frac{f'(0)}{s^2}+\ldots +\frac{f^{(n-1)}(0)}{s^n} +\cO(1/s^{n+1}).
$$
\end{proposition}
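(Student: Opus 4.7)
The plan is to prove both assertions by repeated integration by parts, using the exponential decay of $e^{-st}$ to make all boundary terms at infinity vanish, and bounding the remaining integral by a trivial sup-norm estimate.

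First I would observe that the hypothesis $\|f'\|_\infty < \infty$ gives the growth estimate $|f(t)| \leq |f(0)| + \|f'\|_\infty\, t$, so $f(t)e^{-st}$ is absolutely integrable on $\RR_+$ for every $s>0$ and vanishes as $t \to +\infty$. Integration by parts then yields
$$
\cL f(s) = \left[-\frac{f(t)}{s}e^{-st}\right]_0^{+\infty} + \frac{1}{s}\int_0^{+\infty} f'(t)\, e^{-st}\, dt = \frac{f(0)}{s} + \frac{1}{s}\cL f'(s).
$$
Since $|\cL f'(s)| \leq \|f'\|_\infty \int_0^{+\infty} e^{-st}\, dt = \|f'\|_\infty/s$, one obtains $\cL f(s) = f(0)/s + \cO(1/s^2)$, which in particular gives the first assertion $\cL f(s) = \cO(1/s)$.

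For the asymptotic expansion I would iterate this identity. Under the stronger hypothesis that $f$ is infinitely differentiable with all derivatives bounded, each iterate $f^{(k)}$ has at most linear growth, hence the boundary terms at $+\infty$ again vanish and each integration by parts is legitimate. An easy induction on $n$ then produces
$$
\cL f(s) = \sum_{k=0}^{n-1}\frac{f^{(k)}(0)}{s^{k+1}} + \frac{1}{s^n}\,\cL f^{(n)}(s),
$$
and the final term is controlled by $|\cL f^{(n)}(s)| \leq \|f^{(n)}\|_\infty / s$, giving the error $\cO(1/s^{n+1})$ claimed in the statement.

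There is no substantive obstacle here; the only points requiring care are the vanishing of the boundary terms at $t=+\infty$ and the legitimacy of the iterated integration by parts, both of which follow at once from the polynomial growth bound on each $f^{(k)}$ combined with the exponential decay of $e^{-st}$. The result is a completely standard Watson-type lemma for the Laplace transform.
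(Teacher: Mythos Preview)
Your proof is correct and follows essentially the same route as the paper: integration by parts to obtain $\cL f(s)=\frac{f(0)}{s}+\frac{1}{s}\cL f'(s)$, a sup-norm bound on $\cL f'(s)$, and then iteration. You are in fact slightly more careful than the paper in justifying convergence and the vanishing of the boundary term at infinity via the linear growth estimate on $f$.
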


\begin{proof}
The proof is immediate by integration by parts:
\begin{align*}
{\cL} f(s)&= \left [-f(t) \frac{e^{-st}}{s}\right ]_0^{+\infty} +\frac{1}{s} \int_0^{+\infty} f'(t)  \, e^{-st}\, dt \\
&=\frac{f(0)}{s}+\frac{1}{s} \int_0^{+\infty} f'(t)  \, e^{-st}\, dt \\
&=\frac{f(0)}{s}+\frac{1}{s} \cL f'(s).
\end{align*}
As $f'(s)$ is bounded, we have that $\cL f'(s)$ is bounded for $s\geq 1$. This implies that $\cL f(s)=\cO(1/s)$,
and the same will be true for $f'$, that is, $\cL f'(s)=\cO(1/s)$, hence $\cL f(s)=\frac{f(0)}{s} + \cO(1/s^2)$.
Working inductively, we get the result for any $n\geq 1$.
\end{proof}

%%%%%%%%%%%%%%%%%%%%%%%%%%%%%%%%%%%
\section{Derivation of the integral formulas}
%%%%%%%%%%%%%%%%%%%%%%%%%%%%%%%%%%%

Consider the $n$-th higher Frullani integrals given in Theorem \ref{thm:Frullani}
and we add from $s=0$ to $s-1$ to get (we use the convention $0^0=1$)
\begin{align*}
\sum_{k=1}^{s} k^n \log k = &\sum_{k=1}^n \binom{n}{k} (H_n-H_{n-k}) \left ( \sum_{j=0}^{s-1} j^k \right ) + \\
& + (-1)^{n+1} n! \int_0^{+\infty} \frac{1}{t^n}\left(\frac{1-e^{-st}}{1-e^{-t}} - 
\sum_{k=0}^{n} \frac{ (-t)^k}{k!} \left ( \sum_{j=0}^{s-1} j^k  \right ) \right) 
\frac{e^{-t} dt}{t}\, .
\end{align*}

By Faulhaber's formula, for $k\geq 1$,
$$
\sum_{j=0}^{s-1} j^k 
$$
is a polynomial in the variable $s$ without constant term, whose coefficients 
are given by Bernoulli numbers (we remind this fact in the Appendix).

Therefore, we have that the sum
$$
\sum_{k=1}^n \binom{n}{k} (H_n-H_{n-k}) \left (\sum_{j=0}^{s-1} j^k \right )
$$
is also a polynomial in the variable $s$ without constant term and we can disregard it when looking for  
the constant coefficient of the asymptotic expansion for $s\to +\infty$.

Thus, we reduce the problem at looking for the constant term of the asymptotic expansion of the integral
\begin{align*}
I_n &=(-1)^{n} n!  \int_0^{+\infty} \frac{1}{t^n}\left(\frac{e^{-st}-1}{1-e^{-t}} + 
\sum_{k=0}^{n} \frac{ (-t)^k}{k!} \left ( \sum_{j=0}^{s-1} j^k  \right ) \right) 
\frac{e^{-t} dt}{t} \\
 &= (-1)^{n} n!   \int_0^{+\infty} \left( 
e^{-st}\frac{1}{t^n}
\frac{1}{1-e^{-t}} -
\frac{1}{t^n}\frac{1}{1-e^{-t}}+ \frac{1}{t^n}\sum_{k=0}^{n} \frac{ (-t)^k}{k!} \left (\sum_{j=0}^{s-1} j^k  \right ) 
\right) 
 \frac{e^{-t} dt}{t} \, .
\end{align*}

We use the expansion 
$$
\frac{1}{1-e^{-t}}=\sum_{k=-1}^{+\infty} b_k t^k =\frac1t +\frac12+\frac{1}{12}t -\frac{1}{720} t^3+\ldots
$$
Then
\begin{align}
&\frac{(-1)^{n}}{ n!}  I_n = 
\int_0^{+\infty} \Bigg ( e^{-st} \sum_{k=-1}^{n} b_k t^{k-n}+
e^{-st} \frac{1}{t^n} \left (\frac{1}{1-e^{-t}}- \sum_{k=-1}^{n} b_k t^{k} \right ) \nonumber \\
& \qquad \qquad \qquad\qquad \qquad \qquad -\frac{1}{t^n}\frac{1}{1-e^{-t}}+ \frac{1}{t^n} \sum_{k=0}^{n} \frac{(-t)^k}{k!} 
\left (\sum_{j=0}^{s-1} j^k  \right ) \Bigg) \frac{e^{-t} dt}{t} \nonumber \\
&= \sum_{k=-1}^{n} b_k  \int_0^{+\infty} \frac{1}{t^{n-k}}\left (e^{-st} -
\sum_{l=0}^{n-k} \frac{(-st)^l}{l!} \right ) \frac{e^{-t} dt}{t} + \label{eqn:1}\\
&\ \  + \int_0^{+\infty}  
e^{-st} \frac{1}{t^n} \left (\frac{1}{1-e^{-t}}- \sum_{k=-1}^{n} b_k t^{k} \right ) \frac{e^{-t} dt}{t} + \label{eqn:2}\\
& \ \ + \int_0^{+\infty} \left [  \sum_{k=-1}^{n} \frac{b_k}{t^{n-k}}   \sum_{l=0}^{n-k} \frac{(-st)^l}{l!}  
-\frac{1}{t^n} \frac{1}{1-e^{-t}}+ \frac{1}{t^n} \sum_{k=0}^{n} \frac{ (-t)^k}{k!} \left ( \sum_{j=0}^{s-1} j^k  \right ) \right ]
\frac{e^{-t} dt}{t}\, . \label{eqn:3}
\end{align}

In line (\ref{eqn:1}) we recognize the first $n+1$ higher Frullani integrals:
\begin{align*}
\sum_{k=-1}^{n} b_k  \int_0^{+\infty} & \frac{1}{t^{n-k}}\left (e^{-st} -
\sum_{l=0}^{n-k} \frac{(-st)^l}{l!} \right ) \frac{e^{-t} dt}{t}  \\
 &= \left (\sum_{k=-1}^{n} b_k  \, \frac{(-1)^{n-k+1}}{(n-k)!} (s+1)^{n-k}\right ) \, \log (s+1) \quad   (\text{mod} \, s\QQ[s] )  \\
&=  \frac{(-1)^n}{ n!} \, \tilde A_n(s) \log (s+1) \quad  (\text{mod} \, s\QQ[s] ),
\end{align*}
for some polynomial $\tilde A_n(s)\in \QQ[s]$. 
Note that the coefficients $\tilde a_k$ of 
the polynomials $\tilde A_n$ can be computed explicitly from the coefficients $(b_k)_{k\geq -1}$. Indeed
for $0\leq j\leq n+1$, we have
\begin{equation}\label{eqn:aj}
 \tilde{a}_j= \sum_{k=-1}^{n-j}  b_k (-1)^{k+1} \frac{n!}{(n-k-j)!j!}\, .
 \end{equation}

The second integral (\ref{eqn:2}) is $\cO(1/s)$ when $s\to +\infty$ using Proposition \ref{prop:Laplace_decay}.

For the third integral (\ref{eqn:3}), we replace the sum of $k$-powers of the first $s-1$ integers by the Faulhaber polynomial in $s$ that is given 
by the Bernoulli type coefficients $(b_k)$ (use Proposition \ref{prop:Faulhaber} from the Appendix),
\begin{align*}
&\int_0^{+\infty} \left [  \sum_{k=-1}^{n} \frac{b_k}{t^{n-k}}   \sum_{l=0}^{n-k} \frac{(-st)^l}{l!}  
-\frac{1}{t^n} \frac{1}{1-e^{-t}}+ \frac{1}{t^n} \sum_{k=0}^{n} \frac{ (-t)^k}{k!} \left ( \sum_{j=0}^{s-1} j^k  \right ) \right ]
\frac{e^{-t} dt}{t}\\
&= \int_0^{+\infty} \left [  \sum_{h=-1}^{n} \frac{b_h}{t^{n-h}}   \sum_{l=0}^{n-h} \frac{(-st)^l}{l!}  
-\frac{1}{t^n} \frac{1}{1-e^{-t}}+ \frac{1}{t^n} \sum_{k=0}^{n} \frac{ (-t)^k}{k!} 
\left ((-1)^{k+1} k! \sum_{l=1}^{k+1}  (-1)^l b_{k-l} \, \frac{s^l}{l!}  \right ) \right ]
\frac{e^{-t} dt}{t} \\
& = \int_0^{+\infty}  
 \frac{1}{t^n} \left (\sum_{k=-1}^{n} b_k t^{k} -\frac{1}{1-e^{-t}} \right ) \frac{e^{-t} dt}{t} \, .
\end{align*}
The simplification to the last result comes from the observation that there is a cancellation of the terms 
in the second line for the first summation when $h$ equals to $k-l$. 
In the first sum, the condition is $0\leq l\leq n-h \neq n+1$. In the second sum
the condition is $1\leq l \leq k+1\leq n+1$. This means that only the terms for $l=0$ in the first sum remain. 
This massive cancellation is to be expected since the coefficients of the monomials on the variable $s$ would give divergent 
integrals if they wouldn't vanish.

Therefore, this gives the modified Stirling-Ramanujan constant 
$$
\tilde S_n =  (-1)^{n+1} n! \int_0^{+\infty}  
 \frac{1}{t^n} \left (\frac{1}{1-e^{-t}}- \sum_{k=-1}^{n} b_k t^{k} \right ) \frac{e^{-t} dt}{t} \, .
$$
Hence using (\ref{eqn:move}) and (\ref{eqn:aj}), we get
$$
S_n= \tilde S_n  + \sum_{j=1}^{n+1} \frac{ (-1)^{j+1}}{j} \, \tilde a_j  =  \tilde S_n  + 
\sum_{j=1}^{n+1} \frac{ (-1)^{j+1}}{j} \sum_{h=-1}^{n-j}  b_h  (-1)^{h+1}\frac{n! }{(n-h-j)!j!}\, ,
$$
and $S_n= \tilde S_n  + \hat r_n$ where $\hat r_n$ is the rational number
$$
\hat r_n =\sum_{j=1}^{n+1} \sum_{h=-1}^{n-j} \frac{ (-1)^{j+h}}{j}  b_h \frac{n! }{(n-h-j)!j!} \, .
$$
We can insert this rational number inside the integral and we finally get
$$
S_n =  (-1)^{n+1} n! \int_0^{+\infty}  
 \frac{1}{t^n} \left (\frac{1}{1-e^{-t}}- \sum_{k=-1}^{n} b_k t^{k} -r_n t^{n+1}\right ) \frac{e^{-t} dt}{t} \, ,
$$
where the rational number $r_n$ is given by
$$
r_n =\frac{ (-1)^{n}}{n!} \hat r_n = \sum_{j=1}^{n+1} \sum_{h=-1}^{n-j} \frac{ (-1)^{n+j+h}}{j}  b_h \frac{1}{(n-h-j)!j!}\, .
$$
We can simplify this expression:

\begin{proposition}\label{eqn:rn}
We have
$$
r_n=\sum_{h=-1}^{n-1} b_h  \frac{(-1)^{n-h+1}}{(n-h)!} H_{n-h}  \, .
$$
\end{proposition}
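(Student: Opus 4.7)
The plan is to manipulate the double sum for $r_n$ by swapping the order of summation and then invoking a classical binomial--harmonic identity. Starting from
$$
r_n = \sum_{j=1}^{n+1} \sum_{h=-1}^{n-j} \frac{ (-1)^{n+j+h}}{j}\,  b_h\, \frac{1}{(n-h-j)!\, j!}\, ,
$$
the first step is to interchange the summations. The constraints $1\le j \le n+1$ and $-1\le h\le n-j$ are equivalent to $-1\le h\le n-1$ and $1\le j\le n-h$, so
$$
r_n = \sum_{h=-1}^{n-1} b_h\, (-1)^{n+h} \sum_{j=1}^{n-h} \frac{(-1)^j}{j\cdot j!\, (n-h-j)!}\, .
$$

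Next, set $m = n-h\ge 1$ and pull out a factor of $\frac{1}{m!}$ so the inner sum becomes a binomial sum. Since $(-1)^{n+h}=(-1)^{n-h}=(-1)^m$, what remains to prove is the identity
$$
\sum_{j=1}^{m} \frac{(-1)^{j+1}}{j}\binom{m}{j} = H_m\, ,
$$
because once this is established, the inner sum equals $-H_m/m!$, and multiplying through by $(-1)^m$ and restoring $m=n-h$ yields exactly
$$
r_n = \sum_{h=-1}^{n-1} b_h\, \frac{(-1)^{n-h+1}}{(n-h)!}\, H_{n-h}\, .
$$

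The harmonic--binomial identity above is classical; the cleanest way to record it in the proof is by induction on $m$, using Pascal's rule $\binom{m}{j}=\binom{m-1}{j}+\binom{m-1}{j-1}$, which after splitting the sum and a small index shift produces the recurrence $H_m = H_{m-1} + 1/m$. Alternatively, one can differentiate $\frac{1-(1-x)^m}{x} = \sum_{j=1}^{m}(-1)^{j+1}\binom{m}{j}x^{j-1}$ and integrate term by term from $0$ to $1$. The only real bookkeeping issue is tracking signs through the change of variable $m=n-h$ and verifying that the $j=0$ terms truly do not contribute to $r_n$ in the swapped sum; there is no substantive analytic obstacle, as the whole argument is a rearrangement of a finite rational sum.
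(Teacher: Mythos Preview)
Your proof is correct and follows essentially the same approach as the paper: interchange the order of summation, pull out $1/(n-h)!$ to form binomial coefficients, and apply the classical identity $\sum_{j=1}^{m}\frac{(-1)^{j}}{j}\binom{m}{j}=-H_m$. The only cosmetic difference is that the paper verifies this binomial--harmonic identity via the integral representation $H_n=\int_0^{\infty}\frac{1-e^{-nt}}{1-e^{-t}}\,e^{-t}\,dt$, whereas you suggest induction with Pascal's rule or the real integral $\int_0^1\frac{1-(1-x)^m}{x}\,dx$ (your word ``differentiate'' there is a slip---you only need to integrate that identity).
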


\begin{proof}
We compute 
\begin{align*} 
r_n &= \sum_{j=1}^{n+1} \sum_{h=-1}^{n-j} \frac{ (-1)^{n+j+h}}{j}  b_h \frac{1}{(n-h-j)!j!}  \nonumber\\
&= \sum_{h=-1}^{n-1} b_h  \frac{(-1)^{n-h}}{(n-h)!} \sum_{j=1}^{n-h} \frac{(-1)^{j}}{j} \binom{n-h}{j} \nonumber\\
&= \sum_{h=-1}^{n-1} b_h  \frac{(-1)^{n-h+1}}{(n-h)!} H_{n-h}\, , 
\end{align*}
where we use the identity $\sum\limits_{k=1}^n \frac{(-1)^k}{k} \binom{n}{k} =-H_n$. These type of identities of harmonic 
numbers are immediate when we write them down as exponential periods:
$$
H_n= 1+\frac12 +\ldots +\frac1n = \sum_{k=1}^n \int_0^{+\infty} e^{-(k-1)t} \, e^{-t} \, dt = \int_0^{+\infty} \frac{1-e^{-nt}}{1-e^{-t}} \, e^{-t} \, dt \, ,
$$
and then
\begin{align*}
H_n &=  \int_0^{+\infty} \frac{1-(1- (1-e^{-t}))^n}{1-e^{-t}} \, e^{-t} \, dt \\
&=- \int_0^{+\infty} \sum_{k=1}^n \binom{n}{k} (-1)^k (1-e^{-t})^{k-1} \, e^{-t} \, dt \\
&=- \sum_{k=1}^n  \binom{n}{k} (-1)^k \int_0^{+\infty}  (1-e^{-t})^{k-1} \, e^{-t} \, dt \\
&=- \sum_{k=1}^n  \binom{n}{k} \frac{(-1)^k}{k+1}  \, .
\end{align*}
\end{proof}

This completes the proof of Theorem \ref{thm:main}.

\section{Particular cases}
Using Proposition \ref{eqn:rn}, we obtain
 \begin{align*}
  r_0 &=b_{-1}=1, \\
 r_1 &= - \frac{H_2 b_{-1}}{2!} +\frac{H_1 b_0}{1!} = -\frac14, \\[2pt]
 r_2 &= \frac{H_3 b_{-1}}{3!} -\frac{H_2 b_0}{2!} +\frac{H_1 b_1}{1!} = \frac1{72}, \\[2pt]
r_3 &= -\frac{H_4 b_{-1}}{4!} +\frac{H_3 b_0}{3!} -\frac{H_2 b_1}{2!} = \frac{1}{288}\, .
\end{align*}

Therefore Theorem \ref{thm:main} gives
\begin{align*}
S_0 &=\frac{\log (2\pi)}{2}= \int_0^{+\infty} \left (\frac1t +\frac12 +t -\frac{1}{1-e^{-t}} \right ) \frac{e^{-t}}{t}  dt\, , \\
S_1 &= \log A=\int_0^{+\infty} \frac1t \, \left(\frac{1}{1-e^{-t}} - \frac1t
   -\frac{1}{2}-\frac{t}{12}+ \frac{t^2}{4}\right) \frac{e^{-t}}{t} \,  dt \, , \\
S_2 &=-2\int_0^{+\infty} \frac{1}{t^2} \, \left(\frac{1}{1-e^{-t}} - \frac1t
   -\frac{1}{2}-\frac{t}{12} -\frac1{72} t^3 \right) \frac{e^{-t}}{t} \,  dt \, , \\
S_3 &=6 \int_0^{+\infty} \frac{1}{t^3} \, \left(\frac{1}{1-e^{-t}} - \frac1t
   -\frac{1}{2}-\frac{t}{12} 
   +\frac{t^3}{720} - \frac{1}{288} t^4\right) \frac{e^{-t}}{t} \,  dt \, .
\end{align*}

The numbers $S_n$ up to $n=4$ are computed by Bendersky \cite[p.\ 265]{Be}. Translating into natural logarithm (Bendersky uses
decimal logarithm), we have
\begin{align*}
S_0 &= \log(10) \cdot 0,399 089 \ldots = 0.918938\ldots \\
S_1 &= \log(10) \cdot0.108 032 \ldots = 0.248754\ldots \\
S_2 &= \log(10) \cdot0.013 223 \ldots = 0.030448\ldots\\
S_3 &= \log(10) \cdot(-1+0.991 029 \ldots)= -0.020656\ldots
\end{align*}
This agrees with the numerical values of the  integrals in Theorem \ref{thm:main} given above.

For $n=-1$, we also have that the above formulas give $r_{-1}= 0$ and 
$$
S_{-1}= \gamma=\int_0^{+\infty} t \left (\frac{1}{1-e^{-t}} -\frac1t \right ) \frac{e^{-t}}{t} dt \, .
$$

% \newpage

\section{General scope of the method}

The precedent procedure is very general and allows to find a multitude of integral formulas for many constants defined 
as the constant term in asymptotic expansions of arithmetic nature. Consider sums (typical divergent) of the type
$$
\sum_{k=0}^{s} f(k+1),
$$
where $f(s+1)$ is an LLD function (LLD means Left Located Divisor) so that the general divisor of $f$ are in the left half plane.
When $f$ is meromorphic (for example as the Euler Gamma function $\Gamma$), 
then the general divisor is only composed by zeros and poles. In general,
we can have ramified singularities (as for the logarithm function $\log s$).

We consider the asymptotic when $s\to +\infty$ of the above sum. 
For natural arithmetic functions, these sums are divergent and have a natural asymptotic expansion on a natural bases of functions
and a decaying part in $\cO(1/s)$. The constant term in the expansion define generalized Stirling-Ramanujan constants.

A particularly interesting case is when $f$ admits a Malmst\'en type formula. We recall that Malmst\'en formula for the Euler 
Gamma function (\cite{Mal}), for $\Re s >0$,
\begin{equation*}
   \log \Gamma (s+1) = \int_0^{+\infty} \left ( s+ \frac{e^{-st}-1}{1-e^{-t}}\right ) \frac{e^{-t} dt}{t} \, .
\end{equation*}
A general Malmst\'en formula for $g=e^f$ over the ring $\QQ(t,e^{-t})[e^{-st}]$ is, for $\Re s >0$,
$$
\log g(s+1)= f(s+1)=\int_0^{\infty} G(t, e^{-t}, e^{-st}) \frac{e^{-t} dt}{t}\, .
$$
Note that this means that $f(s)$ behaves as an exponential period (with parameter $s$). 
The higher Frullani integrals are just Malmst\'en formulas 
for the $f(s)=s^n\log(s)$, or $g(s) =s^{s^n}$.

In that case the general procedure presented will also work. The key property 
is that the sum for which we study the asymptotic is also 
an exponential period in the same ring $\QQ(t,e^{-t})[e^{-st}]$.
   
% \newpage

\section{Appendix: Bernoulli computations.}

We review in this Appendix the computations with Bernoulli numbers that we use.

The classical Bernoulli polynomials $(B_k(s))_{k\geq 0}$ can be defined via their generating function
$$
\frac{te^{st}}{e^t-1} =\sum_{k=0}^{+\infty} B_k(s) \, \frac{t^k}{k!}\, .
$$
Making the change of variable $t\to -t$ this can also be written (in a form that is more appealing to us) as
$$
\frac{te^{-st}}{1-e^{-t}} =\sum_{k=0}^{+\infty} (-1)^k B_k(s) \, \frac{t^k}{k!}\, .
$$
The Bernoulli numbers are defined as $B_k=B_k(0)$.

\medskip

We prefer to work with a related sequence of polynomials $(b_k(s))_{k\geq 0}$ defined via their generating function
$$
\frac{e^{-st}}{1-e^{-t}} =\frac1t +\sum_{k=0}^{+\infty} b_k(s) \, t^k\, .
$$
Thus we have
$$
b_k(s)= (-1)^{k+1} \frac{B_{k+1}(s)}{(k+1)!}\, ,
$$
and the related Bernoulli like numbers $b_k=b_k(0)$, and defining $b_{-1}=1$, thus
$$
\frac{1}{1-e^{-t}} =\sum_{k=-1}^{+\infty} b_k \, t^k\, .
$$

We can compute in two ways the expansion of $\displaystyle \frac{1-e^{-st}}{1-e^{-t}}$,
$$
\frac{1-e^{-st}}{1-e^{-t}} =\frac{1}{1-e^{-t}}-\frac{e^{-st}}{1-e^{-t}}=\sum_{k=0}^{+\infty} (b_k- b_k(s)) \, t^k
$$
and, for a positive integer $s$,
$$
\frac{1-e^{-st}}{1-e^{-t}} =\sum_{j=0}^{s-1}e^{-jt}=\sum_{k=0}^{+\infty} \frac{(-t)^k}{k!} 
\left (\sum_{j=0}^{s-1} j^k \right ).
$$
The sum above can be taken from $j=1$ to $s-1$, except in the case $k=0$ (in which case, we set $0^0=1$).

It follows the form of Faulhaber's formula for the sum of $k$-powers of the first $s-1$ integers.

\begin{proposition} We have
$$
\sum_{j=0}^{s-1} j^k = (-1)^k k! (b_k- b_k(s)).
$$
\end{proposition}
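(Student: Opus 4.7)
The plan is to prove the identity by computing the power series expansion of $\dfrac{1-e^{-st}}{1-e^{-t}}$ in two different ways and matching coefficients of $t^k$. This is essentially the strategy the text has already set up just before stating the proposition, so the task is to turn the setup into a clean derivation.

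First I would observe that for a positive integer $s$, the geometric series gives the elementary identity
$$
\frac{1-e^{-st}}{1-e^{-t}} = \sum_{j=0}^{s-1} e^{-jt}.
$$
Expanding each exponential as a Taylor series in $t$ and swapping the finite outer sum with the inner series yields
$$
\frac{1-e^{-st}}{1-e^{-t}} = \sum_{k=0}^{+\infty} \frac{(-t)^k}{k!}\Bigl(\sum_{j=0}^{s-1} j^k\Bigr),
$$
with the convention $0^0 = 1$ taking care of the $k=0$, $j=0$ term. This is the first expansion.

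For the second expansion, I would split the left-hand side as a difference of two rational expressions in $e^{-t}$:
$$
\frac{1-e^{-st}}{1-e^{-t}} = \frac{1}{1-e^{-t}} - \frac{e^{-st}}{1-e^{-t}}.
$$
Using the defining generating functions $\dfrac{1}{1-e^{-t}} = \dfrac{1}{t} + \sum_{k\geq 0} b_k\, t^k$ and $\dfrac{e^{-st}}{1-e^{-t}} = \dfrac{1}{t} + \sum_{k\geq 0} b_k(s)\, t^k$, the singular $1/t$ contributions cancel exactly, leaving the holomorphic expansion
$$
\frac{1-e^{-st}}{1-e^{-t}} = \sum_{k=0}^{+\infty} \bigl(b_k - b_k(s)\bigr)\, t^k.
$$

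Finally, I would equate the coefficients of $t^k$ from the two expansions to conclude
$$
\frac{(-1)^k}{k!}\sum_{j=0}^{s-1} j^k = b_k - b_k(s),
$$
and multiplying by $(-1)^k k!$ produces the claimed Faulhaber-type identity. There is no real obstacle here: the only subtle point is checking that the two series actually converge in a common neighborhood of $t=0$ and that the $1/t$ singularities cancel before comparing coefficients, both of which follow directly from the definitions of the $b_k$ and $b_k(s)$ recorded in the appendix.
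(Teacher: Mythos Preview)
Your proof is correct and follows exactly the paper's approach: expand $\dfrac{1-e^{-st}}{1-e^{-t}}$ in two ways, once via the geometric sum $\sum_{j=0}^{s-1} e^{-jt}$ and once via the difference of the generating functions defining $b_k$ and $b_k(s)$, then equate coefficients of $t^k$. The paper presents this derivation in the text immediately preceding the proposition, so your write-up is essentially a more explicit rendition of the same argument.
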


As it is well known we can compute the Bernoulli polynomials from the sequence of Bernoulli numbers:
$$
B_k(s)=\sum_{j=0}^k \binom{k}{j} \, B_{k-j} s^j\, .
$$
Hence we have
$$
b_k(s) =\sum_{j=0}^{k+1}  (-1)^j\, b_{k-j} \, \frac{s^j}{j!}
$$
and we conclude:

\begin{proposition} \label{prop:Faulhaber}
We have
$$
\sum_{j=0}^{s-1} j^k = (-1)^{k+1} k! \sum_{j=1}^{k+1} (-1)^j  b_{k-j} \, \frac{s^j}{j!}\, .
$$
\end{proposition}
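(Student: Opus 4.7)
The plan is to deduce the Faulhaber-type formula as a direct corollary of the preceding proposition
$$\sum_{j=0}^{s-1} j^k = (-1)^k k!\,\bigl(b_k - b_k(s)\bigr),$$
by substituting the polynomial expansion
$$b_k(s) = \sum_{j=0}^{k+1}(-1)^j\, b_{k-j}\, \frac{s^j}{j!}$$
that is displayed immediately before the statement. This expansion itself is obtained by applying the relation $b_k(s) = (-1)^{k+1} B_{k+1}(s)/(k+1)!$ to the classical identity $B_{k+1}(s) = \sum_{j=0}^{k+1}\binom{k+1}{j} B_{k+1-j}\, s^j$ and converting each ratio $B_{k+1-j}/(k+1-j)!$ back into $(-1)^{k-j+1}\, b_{k-j}$; the overall sign collapses through $(-1)^{k+1}(-1)^{k-j+1}=(-1)^j$, so nothing is hidden there.

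The only real step is then to isolate the constant term of $b_k(s)$: the $j=0$ summand equals $b_k$, so
$$b_k - b_k(s) = -\sum_{j=1}^{k+1}(-1)^j\, b_{k-j}\, \frac{s^j}{j!}.$$
Multiplying both sides by $(-1)^k k!$ and absorbing the minus sign into the exponent of $-1$ yields
$$\sum_{j=0}^{s-1} j^k = (-1)^{k+1} k!\,\sum_{j=1}^{k+1}(-1)^j\, b_{k-j}\, \frac{s^j}{j!},$$
which is the claim.

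There is essentially no obstacle in this proposition: all the substantive work, namely the generating-function identity $(1-e^{-st})/(1-e^{-t}) = \sum_{j=0}^{s-1} e^{-jt}$ that links the Faulhaber sum to $b_k - b_k(s)$, was handled in the previous proposition. The remaining content is a purely symbolic manipulation, and the only point requiring genuine care is tracking the signs through the reindexing from Bernoulli numbers $B_m$ to the shifted coefficients $b_{m-1}$.
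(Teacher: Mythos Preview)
Your proof is correct and follows exactly the paper's approach: the paper also derives the result by combining the preceding proposition $\sum_{j=0}^{s-1} j^k = (-1)^k k!\,(b_k - b_k(s))$ with the expansion $b_k(s) = \sum_{j=0}^{k+1}(-1)^j b_{k-j}\, s^j/j!$, and simply writes ``and we conclude'' before stating the result. Your write-up merely makes explicit the cancellation of the $j=0$ term and the sign bookkeeping that the paper leaves implicit.
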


\bigskip

\subsection*{Acknowledgements}
We thank the referee for his remarks and corrections that improved the article.

The first author has been partially supported by Comunidad de Madrid R+D Project PID/27-29 
and Ministerio de Ciencia e Innovaci\'on Project PID2020-118452GB-I00 (Spain). 

% \bigskip

\end{document}